\newcommand\cD{{\mathcal D}}
\newcommand\cF{{\mathcal F}}
\newcommand\cG{{\mathcal G}}
\newcommand\cM{{\mathcal M}}
\newcommand\cN{{\mathcal N}}
\newcommand\cP{{\mathcal P}}
\theoremstyle{plain}
\newtheorem{theorem}{Theorem}[section]
\newtheorem{lemma}[theorem]{Lemma}
\newtheorem{conjecture}[theorem]{Conjecture}
\newtheorem{proposition}[theorem]{Proposition}
\theoremstyle{definition}
\newtheorem{claim}[theorem]{Claim}
\newcommand\lref[1]{Lemma~\ref{lem:#1}}
\newcommand\tref[1]{Theorem~\ref{thm:#1}}
\newcommand\cref[1]{Corollary~\ref{cor:#1}}
\newcommand\clref[1]{Claim~\ref{clm:#1}}
\newcommand\pref[1]{Proposition~\ref{prop:#1}}
\title{Game saturation of intersecting families}
\author{Bal\'azs Patk\'os\thanks{Alfr\'ed R\'enyi Institute of Mathematics, P.O.B. 127, Budapest H-1364, Hungary. Email: patkos.balazs@renyi.mta.hu. Research supported by
    Hungarian National Scientific Fund, grant number: PD-83586 and the J\'anos Bolyai Research Scholarship of the Hungarian Academy of Sciences.} \and M\'at\'e Vizer\thanks{Alfr\'ed R\'enyi Institute of Mathematics, P.O.B. 127, Budapest H-1364, Hungary. Email: vizer.mate@renyi.mta.hu}}
\begin{document}
\maketitle

\begin{abstract}
We consider the following combinatorial game: two players, Fast and Slow, claim $k$-element subsets of $[n]=\{1,2,...,n\}$ alternately, one at each turn, such that both players are allowed to pick sets that intersect all previously claimed subsets. The game ends when there does not exist any unclaimed $k$-subset that meets all already claimed sets. The score of the game is the number of sets claimed by the two players, the aim of Fast is to keep the score as low as possible, while the aim of Slow is to postpone the game's end as long as possible. The game saturation number is the score of the game when both players play according to an optimal strategy. To be precise we have to distinguish two cases depending on which player takes the first move. Let $gsat_F(\mathbb{I}_{n,k})$ and $gsat_S(\mathbb{I}_{n,k})$ denote the score of the saturation game $(X,\cD)$ when both players play according to an optimal strategy and the game starts with Fast's or Slow's move, respectively. We prove that  $\Omega_k(n^{k/3-5}) \le gsat_F(\mathbb{I}_{n,k}),gsat_S(\mathbb{I}_{n,k}) \le O_k(n^{k-\sqrt{k}/2})$ holds.
\end{abstract}

\textit{Keywords: intersecting families of sets, saturated families, positional games}

{AMS Subject Classification: 05D05, 91A24}
\medskip

\section{Introduction}

\

A very much studied notion of extremal combinatorics is that of saturation. Let $\cP$ be a property of hypergraphs such that whenever a hypergraph $H$ possesses property $\cP$, then so do all subhypergraphs of $\cP$. We say that the hypergraph $H$ is \textit{saturated} with respect to $\cP$ if $H$ has property $\cP$, but for any hyperedge $e \notin E(H)$ the hypergraph $H+e$ does not have property $\cP$ anymore.
A typical problem in extremal combinatorics is to determine $ex(n,\cP)$ ($sat(n,\cP)$) the most (least) number of hyperedges that a hypergraph on $n$
vertices may contain provided it is saturated with respect to $\cP$.

Lots of combinatorial problems have their game theoretical analogs. For a survey on combinatorial games see Fraenkel's paper \cite{Fr}. For topics focusing on positional games, we refer the reader to the book of Beck \cite{B} and the forthcoming book of Hefetz, Krivelevich, Stojakovi\'c and Szab\'o \cite{HKSS}. There are two types of combinatorial games that are related to saturation problems. One of them originates from Hajnal's triangle game \cite{S,MS}, but its more general form is as follows \cite{F,FHJ}: given a family $\cF$ of excluded subgraphs and a host graph $G$, two players pick the edges of $G$ alternately such that the set of all claimed edges should form an $\cF$-free subgraph $H$ of $G$ (i.e. no $F \in \cF$ occurs as a subgraph in $H$). Whenever $H$ becomes $\cF$-saturated, the player on turn cannot make a move and loses/wins (depending on the rules of the game). In Hajnal's triangle game the family $\cF$ consists only of the triangle graph.

In this paper we study a game played by two players, Fast and Slow such that Fast's aim is to create a maximal hypergraph the size of which is as close to the saturation number as possible while Slow's aim is to create a maximal hypergraph the size of which is as close to the extremal number as possible.
More formally, the \textit{saturation game} $(X,\cD)$ is played on the \textit{board} $X$ according to the \textit{rule} $\cD \subseteq 2^{X}$, where $\cD$ is downward closed (or decreasing) family of subsets of $X$, that is $E \subset D \in \cD$ implies $E \in \cD$. Two players Fast and Slow pick one unclaimed element of the board at each turn alternately such that at any time $i$ during the game, the set $C_i$ of all elements claimed thus far belongs to $\cD$. The elements $x \in X \setminus C_i$ for which $\{x\} \cup C_i \in \cD$ holds will be called the \textit{legal moves} at time $i+1$ as these are the elements of the board that can be claimed by the player on turn. The game ends when there is no more legal moves, that is when $C_i$ is a maximal set in $\cD$ and the score of the game is the size of $C_i$. The aim of Fast is to finish the game as fast as possible and thus obtain a score as low as possible while the aim of Slow is to keep the game going as long as possible. The \textit{game saturation number} is the score of the game when both players play according to an optimal strategy. To be precise we have to distinguish two cases depending on which player takes the first move. Let $gsat_F(\cD)$ and $gsat_S(\cD)$ denote the score of the saturation game $(X,\cD)$ when both players play according to an optimal strategy and the game starts with Fast's or Slow's move, respectively. In most cases, the board $X$ is either $\binom{[n]}{k}$ for some $1 \le k \le n$ or $2^{[n]}$. Clearly, the inequalities $sat(\cD) \le gsat_F(\cD), gsat_S(\cD)\le ex(\cD)$ hold.

The first result concerning saturation games is due to F\"uredi, Reimer and Seress \cite{FRS}. They considered the case when the board $X$ is the edge set of the complete graph on $n$ vertices and $\cD=\cD_{n,K_3}$ is the family of all triangle-free subgraphs of $K_n$. They established the lower bound $\frac{1}{2}n\log n\le gsat_F(\cD_{n,K_3}), gsat_S(\cD_{n,K_3})$ and claimed without proof an upper bound $\frac{n^2}{5}$ via personal communication with Paul Erd\H os. Their paper mentions that the first step of Fast's strategy is to build a $C_5$-factor. However, as it was recently pointed out by Hefetz, Krivelevich and Stojakovi\'c \cite{HKS}, Slow can prevent this to happen. Indeed, in his first $\lfloor \frac{n-1}{2}\rfloor$ moves, Slow can create a vertex $x$ with degree $\lfloor \frac{n-1}{2}\rfloor$, and because of the triangle-free property, the neighborhood of $x$ must remain an independent set throughout the game. But clearly, a graph that contains a $C_5$-factor cannot have an independent set larger than $2n/5+4$.

Recently, Cranston, Kinnersley, O and West \cite{CKOW} considered the saturation game when the board $X=X_G$ is the edge set of a graph $G$ and $\cD=\cD_G$ consists of all (partial) matchings of $G$.

In this paper, we will be interested in intersecting families. That is the board $X=X_{n,k}$ will be the edge-set of the complete $k$-graph on $n$ vertices and $\cD=\mathbb{I}_{n,k}$ is the set of intersecting families that is $\mathbb{I}_{n,k}:=\{\cF\subseteq X_{n,k}:  F \cap G\neq \emptyset $ $ \forall F,G \in \cF\}$. Note that by the celebrated theorem of Erd\H os, Ko and Rado \cite{EKR} we have $ex(\mathbb{I}_{n,k})=\binom{n-1}{k-1}$ provided $2k \le n$. The saturation number $sat(\mathbb{I}_{n,k})$ is not known. J-C. Meyer \cite{M} conjectured this to be $k^2-k+1$ whenever a projective plane of order $k-1$ exists. This was disproved by F\"uredi \cite{F} by constructing a maximal intersecting family of size $3k^2/4$ provided a projective plane of order $k/2$ exists, and this upper bound was later improved by Boros, F\"uredi, and Kahn to $k^2/2+O(k)$ \cite{BFK} provided a projective plane of order $k-1$ exists.
The best known lower bound on $sat(\mathbb{I}_{n,k})$ is $3k$ due to Dow, Drake, F\"uredi, and Larson \cite{DDFL}. This holds for all values of $k$.

We mentioned earlier that the game saturation number might depend on which player starts the game. This is indeed the case for intersecting families. If $k=2$, then after the first two moves the already claimed edges are two sides of a triangle. Thus if Fast is the next to move, he can claim the last edge of this triangle and the game is finished, thus $gsat_F(\mathbb{I}_{n,2})=3$. On the other hand, if Slow can claim the third edge, then he can pick an edge containing the intersection point of the first two edges and then all such edges will be claimed one by one and we obtain $gsat_S(\mathbb{I}_{n,2})=n-1$.

The main result of the present paper is the following theorem that bounds away $gsat_S(\mathbb{I}_{n,k})$ and $gsat_F(\mathbb{I}_{n,k})$ both from $ex(\mathbb{I}_{n,k})$ and $sat(\mathbb{I}_{n,k})$ if $n$ is large enough compared to $k$.

\vskip 1cm

\begin{theorem}
\label{thm:main} For all $k \ge 2$ the following holds:
\[
\Omega_k\left(n^{\lfloor k/3\rfloor-5}\right) \le gsat_F(\mathbb{I}_{n,k}), gsat_S(\mathbb{I}_{n,k}) \le O_k\left(n^{k-\sqrt{k}/2}\right)
.\]

\end{theorem}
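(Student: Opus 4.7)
The plan is to prove the upper and lower bounds separately by designing strategies for Fast and Slow respectively; both proofs make essential use of the assumption that $n$ is much larger than any function of $k$.

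\textbf{Upper bound.} For the inequality $gsat_F(\mathbb{I}_{n,k}), gsat_S(\mathbb{I}_{n,k}) \le O_k(n^{k-\sqrt{k}/2})$, I would give a strategy for Fast that uses his first $O_k(k^{3/2})$ moves to enforce a concentration structure: after this initial phase there should be a fixed set $T \subseteq [n]$ of size $t := \lceil \sqrt{k}/2 \rceil$ such that every subsequent legal $k$-set contains $T$. The concentration is produced by Fast playing sunflower-like configurations that, for each $x \in T$, force any legal $k$-set omitting $x$ to meet too many pairwise disjoint ``petals''---a contradiction by pigeonhole. Between Slow's plays, Fast adapts the next piece of his configuration to also hit Slow's previous sets, which is always possible since $n \gg k^{5/2}$. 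Once the concentration is in place, the remaining game is confined to the $\binom{n-t}{k-t} = O_k(n^{k-t})$ many $k$-sets containing $T$, bounding the overall game length. The delicate point is that petals of the various sunflowers must simultaneously stay disjoint within a given sunflower (to keep the pigeonhole force) and intersect across sunflowers (to keep Fast's moves legal); this tension is resolved by a careful bookkeeping of element roles, which I expect to be the main technical step of this direction.

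\textbf{Lower bound.} For the inequality $gsat_F(\mathbb{I}_{n,k}), gsat_S(\mathbb{I}_{n,k}) \ge \Omega_k(n^{\lfloor k/3 \rfloor - 5})$, I would give a strategy for Slow aiming to reach, after $O_k(1)$ of her moves, a configuration with a fixed set $C$ of size $k - \lfloor k/3 \rfloor + 5$ such that (i) every $k$-superset of $C$ is a legal move, and (ii) every legal move meets $C$. Property (ii) is an invariant: every newly claimed set meets $C$ and hence meets every $C$-containing set, so (i) persists throughout the game. Consequently the game cannot end until all $\binom{n-|C|}{\lfloor k/3 \rfloor - 5} = \Omega_k(n^{\lfloor k/3 \rfloor - 5})$ supersets of $C$ have been claimed. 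To force the configuration, Slow plays roughly $k+1$ ``seed'' sets of the form $C \cup P_i$ with essentially pairwise disjoint petals $P_i$ of size $\lfloor k/3 \rfloor - 5$, so that any legal $k$-set missing $C$ would have to meet $k+1$ disjoint petals, an impossibility. The main obstacle here is that Fast's intermediate moves, though legal, may exhaust the ``fresh'' elements needed for Slow's future petals and so threaten to force the later petals to overlap the earlier ones in a way that breaks the pigeonhole. I expect this to be handled by permitting a small controlled overlap among the $P_i$ that still validates the pigeonhole argument, or by enlarging the petals slightly, the $n \gg k$ assumption guaranteeing enough room for Slow's adaptive choices; this robustness argument is the main technical hurdle on the lower-bound side.
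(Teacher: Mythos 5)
Both halves of your proposal contain genuine gaps, and the upper-bound half is built on a mechanism that is provably impossible, so let me address that first.

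\textbf{Upper bound.} You want Fast to force a $t$-set $T$ with $t=\lceil\sqrt{k}/2\rceil$ such that every subsequent legal move \emph{contains} $T$, the forcing being done by sunflower-type pigeonhole structures, one per element of $T$. To force ``every legal $k$-set contains $x$'' by pigeonhole you need $k+1$ claimed sets $G_1,\dots,G_{k+1}$ containing $x$ with $G_1\setminus\{x\},\dots,G_{k+1}\setminus\{x\}$ pairwise disjoint. But once such a configuration exists inside the claimed family, \emph{every} set of the family must contain $x$: a claimed $k$-set avoiding $x$ would have to meet all $k+1$ pairwise disjoint petals, which is impossible. Hence after the first singleton-core sunflower is complete the claimed family is contained in the star of $x$, every unclaimed $k$-set containing $x$ is legal, and no second element can ever be forced; equivalently, two singleton-core sunflowers with $k+1$ petals each cannot pairwise intersect (a set of the first sunflower avoiding $x_2$ would meet $k+1$ disjoint petals of the second). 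This is not a bookkeeping issue that ``careful element roles'' can resolve. The conceptual error is one of direction: pigeonhole/sunflower structures are the tool for making the covering number of the final family \emph{small} (which makes the family \emph{large}, by the standard bound $\binom{n-\tau}{k-\tau}\le|\cF|\le k^{\tau}\binom{n-\tau}{k-\tau}$ for maximal intersecting $\cF$); they are Slow's tool, not Fast's. The paper's Fast instead plays $\lfloor\sqrt{k}\rfloor$ sets $M_1,M_2,\dots$ so that no element lies in more than two of \emph{his own} sets while each $M_i$ still meets all earlier moves; then $\tau(\cF)\ge|\cM|/\Delta(\cM)\ge\sqrt{k}/2$ for the final family, and the upper bound on $|\cF|$ follows from the displayed inequality. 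You should replace your entire upper-bound strategy with an argument of this max-degree type.

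\textbf{Lower bound.} Here your skeleton is right (it is exactly the paper's Proposition on covering sets: $k+1$ claimed sets with pairwise disjoint nonempty parts outside $C$ force every later legal set to meet $C$, and then maximality forces all $\binom{n-|C|}{k-|C|}$ supersets of $C$ into the family), but you have misidentified the obstacle and therefore not addressed the real one. The difficulty is not that Fast exhausts fresh elements of $[n]$ (he uses only $O(k^2)$ points, and $n$ is huge); it is that each of Slow's seed sets $C\cup P_i$ must \emph{intersect every set Fast has already played}, and a seed can do so only through $C$ or through its petal. If it does so through the petal, the petals stop being disjoint fresh sets and the pigeonhole degrades; if through $C$, then $C$ cannot be fixed in advance and must grow adaptively, in the worst case by one element per round, giving $|C|=k$, empty petals, and no bound at all. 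Getting $|C|\le 2k/3+O(1)$ is precisely the technical heart of the paper's proof: Slow maintains an auxiliary set $A_i\subseteq m_0$ of ``reserved'' intersection points alongside the growing cover $C_i$, and a three-case analysis shows $C_i$ stays constant in at least a third of the rounds. Your proposed remedies (slightly overlapping or slightly larger petals) do not substitute for this, and your invariant (ii) as stated also requires all of \emph{Fast's} sets to meet $C$, which Slow cannot guarantee outright (the paper allows one exceptional set and repairs it with one extra element at the end). As written, the lower-bound argument establishes nothing beyond the trivial $|C|\le k$.
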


\vskip 2cm

\section{Proof of \tref{main}}

We start this section by defining an auxiliary game that will enable us to prove \tref{main}.

\vspace{3mm}

We say that a set $S$ \textit{covers} a family $\cF$ of sets if $S \cap F \neq \emptyset$ holds for every set $F \in \cF$. The \textit{covering number} $\tau(\cF)$ is the minimum size of a set $S$ that covers $\cF$. Note that if $\cF$ is an intersecting family of $k$-sets, then $\tau(\cF) \le k$ holds as by the intersecting property any set $F \in \cF$ covers $\cF$. The following proposition is folklore, but for the sake of self-containedness we present its proof.

\begin{proposition}
\label{prop:tau} If $\cF \subseteq \binom{[n]}{k}$ is a maximal intersecting family with covering number $\tau$, then the following inequalities hold:
\[
\binom{n-\tau}{k-\tau} \le |\cF| \le k^{\tau }\binom{n-\tau}{k-\tau}.
\]
\end{proposition}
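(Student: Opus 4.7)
My plan is to prove the two inequalities separately, fixing a minimum cover $S \subseteq [n]$ with $|S|=\tau$ throughout. The \emph{lower bound} rests solely on maximality: for any $k$-set $F \supseteq S$ and any $F' \in \cF$ one has $F \cap F' \supseteq S \cap F' \ne \emptyset$ (because $S$ covers $\cF$), so by maximality $F \in \cF$. The number of $k$-subsets of $[n]$ containing the fixed $\tau$-set $S$ is $\binom{n-\tau}{k-\tau}$, giving $|\cF| \ge \binom{n-\tau}{k-\tau}$.

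For the \emph{upper bound}, the idea is to build an injection
\[
\Phi : \cF \longrightarrow \prod_{s \in S} E_s \times \binom{[n]\setminus S}{k-\tau},
\]
where each $E_s$ is a \emph{witness} in $\cF$ satisfying $E_s \cap S = \{s\}$; such a witness exists because $S$ is a minimum cover, so $S \setminus \{s\}$ fails to cover. The target has cardinality $k^\tau \binom{n-\tau}{k-\tau}$. Since $\cF$ is intersecting, $F \cap E_s \ne \emptyset$ for every $F \in \cF$ and every $s \in S$, so a coordinate $\phi_s(F) \in F \cap E_s$ can always be chosen; the natural convention is $\phi_s(F) = s$ when $s \in F$, and otherwise $\phi_s(F) \in F \cap (E_s \setminus \{s\}) \subseteq F \setminus S$. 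The final coordinate is a $(k-\tau)$-subset $R(F)$ of $F \setminus S$ disjoint from the non-trivial representatives; such an $R(F)$ exists since at most $\tau - |F\cap S|$ elements of $F\setminus S$ are consumed by the representatives, while $|F\setminus S| = k - |F\cap S|$ leaves at least $k-\tau$ available.

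The principal obstacle will be showing that $\Phi$ is injective. From $\Phi(F)$ one reads off the type $T = F\cap S = \{s : \phi_s(F)=s\}$, the non-trivial representatives $\{\phi_s(F):s\notin T\}$, and $R(F)$, and then reconstructs $F = T \cup \{\phi_s(F)\}_{s \notin T} \cup R(F)$ \emph{provided} these three pieces jointly have exactly $k$ elements — equivalently, provided the non-trivial representatives are pairwise distinct, which is a system-of-distinct-representatives (SDR) condition on the sets $(F \cap (E_s\setminus\{s\}))_{s \in S\setminus T}$. My plan is to select an SDR whenever Hall's condition holds, and when it fails to adjust the choice of $R(F)$ — absorbing any coincident representative into the remainder and exploiting the forced pairwise intersections $E_s \cap E_{s'} \ne \emptyset$ among the witnesses — so that the modified $\Phi$ is still injective into the same target, yielding $|\cF| \le k^\tau \binom{n-\tau}{k-\tau}$.
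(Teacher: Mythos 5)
Your lower bound is exactly the paper's argument and is correct. For the upper bound you take a genuinely different route --- a single injection built from a fixed minimum cover $S$ and one witness $E_s$ per $s\in S$ --- but it has a genuine gap at exactly the point you flag, and the gap is quantitative rather than merely technical. If $\phi_s(F)=\phi_{s'}(F)=x$ for two distinct $s,s'\in S\setminus F$ (which is \emph{forced} whenever $F\cap E_s=F\cap E_{s'}=\{x\}$, so no cleverer choice of representatives can avoid it), then the type $T$, the representatives, and $R(F)$ together pin down at most $|T|+(\tau-|T|-1)+(k-\tau)=k-1$ elements of $F$. Since $R(F)$ must have exactly $k-\tau$ elements to live in $\binom{[n]\setminus S}{k-\tau}$, no re-choice of $R(F)$ can absorb the lost element; and the freed tuple coordinate ranges only over $E_{s'}$, which need not contain the missing element of $F$, so it cannot encode it either. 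Two members agreeing everywhere except on that one unrecorded element then collide. The pairwise intersections $E_s\cap E_{s'}\neq\emptyset$ do not rescue you --- they are precisely what creates the coincidences.

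The paper sidesteps all of this by branching sequentially instead of simultaneously. Writing $d_j(\cF)$ for the maximum number of members containing a common $j$-set, it proves $|\cF|\le k\,\Delta(\cF)=k\,d_1(\cF)$ and, for each $j<\tau$, $d_j(\cF)\le k\,d_{j+1}(\cF)$: the witness there is chosen \emph{adaptively} as some $F'\in\cF$ disjoint from the current $j$-set $J$ (it exists because $|J|<\tau$ means $J$ is not a cover), so each branching step records a genuinely new element of the set being counted and coincidences cannot occur. Iterating down to $d_\tau(\cF)\le\binom{n-\tau}{k-\tau}$ gives $|\cF|\le k^{\tau}\binom{n-\tau}{k-\tau}$. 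I would either adopt that scheme or restructure your injection so that the $\tau$ recorded elements are produced one at a time, each against a witness disjoint from everything already recorded; as written, the injectivity of $\Phi$ is not established.
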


\begin{proof}
The first inequality follows from the following observation: if $S$ covers $\cF$, then all $k$-subset of $[n]$ that contain $S$ must belong to $\cF$ by maximality.

To obtain the second inequality note that denoting the \textit{maximal degree} of $\cF$ by $\Delta(\cF)$ the inequality $|\cF| \le k \Delta(\cF)$ holds. Indeed, by the intersecting property we have $|\cF| \le \sum_{x \in F}d(x)$ for any set $F \in \cF$ and the right hand side is clearly not more than $k\Delta(\cF)$.
Let $d_j(\cF)$ denote the maximum number of sets in $\cF$ that contain the same $j$-subset, thus $d_1(\cF)=\Delta(\cF)$ holds by definition. For any $j<\tau$ and $j$-subset $J$ that is contained in some $F \in \cF$ there exists an $F' \in \cF$ with $J \cap F'=\emptyset$. Thus $d_j(\cF) \le kd_{j+1}(\cF)$ is true. Since $d_\tau(\cF)\le \binom{n-\tau}{k-\tau}$ holds, we obtain $d_1(\cF) \le k^{\tau-1}\binom{n-\tau}{k-\tau}$ and thus the second inequality follows by the first observation of this paragraph.
\end{proof}

The main message of \pref{tau} is that if $k$ is fixed and $n$ tends to infinity, then the order of magnitude of the size of a maximal intersecting family $\cF$ is determined by its covering number. As $|\cF|=\Theta_k(n^{k-\tau(\cF)})$ holds, a strategy in the saturation game that maximizes $\tau(\cF)$ is optimal for Fast up to a constant factor, and a strategy that minimizes $\tau(\cF)$ is optimal for Slow up to a constant factor.

Therefore from now on we will consider the \textit{$\tau$-game} in which two players: \textit{minimizer} and \textit{Maximizer} take unclaimed elements of $X=X_{n,k}=\binom{[n]}{k}$ alternately such that at any time during the game the set of all claimed elements should form an intersecting family. The game stops when the claimed elements form a maximal intersecting family $\cF$. The score of the game is the covering number $\tau(\cF)$ and the aim of \textit{minimizer} is to keep the score as low as possible while \textit{Maximizer}'s aim is to reach a score as high as possible. Let $\tau_m(n,k)$ (resp. $\tau_M(n,k)$) denote the score of the game when both players play according to their optimal strategy and the first move is taken by \textit{minimizer} (resp. \textit{Maximizer}). The following simple observation will be used to define strategies.

\begin{proposition}
\label{prop:minmax} Let $\cG=\{G_1,...,G_{k+1}\}$ be an intersecting family of $k$-sets. Assume that there exists a set $C$ such that $G_1 \setminus C,...,G_{k+1}\setminus C$ are non-empty and pairwise disjoint.
Then we have $\tau(\cF) \le |C|$ for any intersecting family $\cF \supseteq \cG$ of $k$-sets.

\end{proposition}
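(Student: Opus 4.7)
The plan is to show directly that the set $C$ itself is a cover of any intersecting family $\cF \supseteq \cG$, which will immediately give $\tau(\cF) \le |C|$.

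First I would argue by contradiction: suppose some $F \in \cF$ satisfies $F \cap C = \emptyset$. Since $\cF$ is intersecting and each $G_i \in \cG \subseteq \cF$, we must have $F \cap G_i \neq \emptyset$ for every $i = 1,\dots,k+1$. But $F \cap C = \emptyset$, so every element of $F \cap G_i$ lies in $G_i \setminus C$; in particular $F \cap (G_i \setminus C) \neq \emptyset$ for each $i$.

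Next I would use the hypothesis that the sets $G_1 \setminus C, \dots, G_{k+1} \setminus C$ are pairwise disjoint (and non-empty). Picking one element $x_i \in F \cap (G_i \setminus C)$ for each $i$ gives $k+1$ distinct elements $x_1,\dots,x_{k+1}$ of $F$, contradicting $|F|=k$. Hence no such $F$ exists, so $C \cap F \neq \emptyset$ for every $F \in \cF$, i.e.\ $C$ covers $\cF$, and $\tau(\cF) \le |C|$.

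There is no real obstacle here; the only thing to be careful about is recording that non-emptiness of $G_i \setminus C$ is used implicitly in guaranteeing that $F$ can indeed pick its intersection with $G_i$ outside $C$ (otherwise one could not force $k+1$ distinct elements of $F$), and that pairwise disjointness is what makes those chosen elements distinct.
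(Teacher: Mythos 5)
Your proof is correct and is essentially identical to the paper's one-line argument: a $k$-set $F$ disjoint from $C$ would have to meet each of the $k+1$ pairwise disjoint sets $G_i\setminus C$, forcing $|F|\ge k+1$. (A minor aside: non-emptiness of the $G_i\setminus C$ is not actually needed for this direction, since if $G_i\subseteq C$ then $F\cap G_i\neq\emptyset$ already forces $F\cap C\neq\emptyset$; but this does not affect the validity of your argument.)
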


\begin{proof}
To see the statement, observe that if a $k$-set $F$ is disjoint from $C$, then it cannot meet all $k+1$ of the sets $G_1\setminus C,G_2 \setminus C,...,G_k\setminus C$.
\end{proof}

\tref{main} will follow from the following two lemmas and \pref{tau}.

\begin{lemma}
\label{lem:taumin} For any positive integer $k$, there exists $n_0=n_0(k)$ such that if $n \ge n_0$, then $$\tau_m(n,k),\tau_M(n,k) \le \lceil 2k/3\rceil +4$$ holds.
\end{lemma}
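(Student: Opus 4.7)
The plan is to derive \lref{taumin} from \pref{minmax}: it suffices to ensure that the final intersecting family $\cF$ contains $k+1$ sets $G_1,\ldots,G_{k+1}$ together with a set $C$ of size at most $c := \lceil 2k/3\rceil + 4$ satisfying the generalized sunflower condition. The minimizer will produce these $G_i$'s among his own moves and identify an appropriate $C$ adaptively during play.

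The minimizer's strategy is to build a sunflower with a slowly-growing core. If he moves first, he plays $A_1 = [k]$; if Maximizer moves first with a set $B_0$, the minimizer plays some $A_1$ meeting $B_0$. In either case he designates an initial core $C \subseteq A_1$ of size $\lceil 2k/3\rceil$, chosen (in the second case) so that $C \cap B_0 \neq \emptyset$. On each subsequent move he plays $A_i = C \cup P_i$, where $P_i$ is a petal disjoint from $C \cup P_1 \cup \cdots \cup P_{i-1}$ (with $P_1 := A_1 \setminus C$), and is chosen to contain exactly one vertex from each previously-played Maximizer set that misses the current $C$ (the ``bad'' moves), padded with fresh vertices. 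This keeps $\{A_1,\ldots,A_i\}$ a sunflower with core $C$ and makes each $A_i$ a legal move.

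The obstruction is a \emph{hidden} bad move: a Maximizer set $B_j$ disjoint from $C$ whose $k$ vertices all lie inside $C \cup P_1 \cup \cdots \cup P_{i-1}$, so that no choice of fresh $P_i$ can hit it. When this occurs the minimizer \emph{augments} $C$ by moving one vertex $v$ of $B_j$ into it; this shrinks one existing petal (the one containing $v$) by one element, preserves the generalized-sunflower structure of the already-played $A_\ell$ relative to the new larger $C$ (the petals remain pairwise disjoint and non-empty as long as $k - |C| \ge 2$), and removes $B_j$ from the set of bad moves. The main content of the proof is to show that Maximizer can trigger at most four such augmentations during the whole game, so that the final core has $|C| \le \lceil 2k/3\rceil + 4$.

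The principal obstacle is precisely this four-augmentation bound. A hidden bad $B_j$ must contain a vertex in each of the earlier petals (which are pairwise disjoint of size roughly $k/3$) and must in addition intersect every previous Maximizer set; packing its $k$ vertices into the existing petal union under these constraints is tight, and each augmentation strictly reduces the number of petal vertices available to accommodate a subsequent hidden bad without passing through $C$. I would formalize this with a careful counting or potential-function argument, tracking for each petal how many Maximizer-vertices it carries and showing that after a bounded number of hidings the residual petal room forces every new Maximizer set to meet $C$. Once this step is in place, the minimizer simply continues playing petals until he has produced $A_1,\ldots,A_{k+1}$; \pref{minmax} applied to these sets with the final core yields $\tau(\cF) \le |C| \le \lceil 2k/3\rceil + 4$, handling both $\tau_m(n,k)$ and, after the minor opening adjustment, $\tau_M(n,k)$.
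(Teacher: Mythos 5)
There is a genuine gap, and it sits exactly where you placed it: the claim that Maximizer can trigger at most four augmentations is not proved, and I do not believe it holds for the strategy you describe. The difficulty is that each of your rounds adds a fresh petal of size $k-|C|\approx k/3$ to the pool of vertices outside $C$, while each augmentation removes only a single vertex from that pool. So the region in which Maximizer can hide grows linearly while your countermeasure shrinks it by one vertex at a time; from round $3$ or $4$ onward (once $\bigcup_\ell P_\ell$ has at least $k$ vertices) Maximizer can place a $k$-set inside the petal union that meets every petal, meets all of his earlier sets (he can arrange the pairwise intersections of his own sets to be distinct vertices lying inside the petal union, so that no single augmented vertex kills more than one future option), and avoids the few augmented vertices. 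This forces an augmentation in essentially every subsequent round and drives $|C|$ toward $k$ rather than $\lceil 2k/3\rceil+4$. There is a second, independent problem: a new petal has only $k-|C|\approx k/3$ slots but must contain a representative of every Maximizer set currently missing $C$; since Maximizer can keep all of his sets disjoint from $C$ and pairwise intersecting only inside the petal union, their parts outside the current structure are disjoint, and after about $k/3$ rounds a single petal cannot hold distinct representatives of all of them.

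The paper's proof avoids both problems by inverting your setup. It starts with a cover $C$ of size $1$ or $2$ and lets it grow by at most one element per round, and, crucially, it maintains the invariant (property d)) that at every moment at most one of Maximizer's sets is uncovered by the current $C$: whenever a second uncovered Maximizer set would appear, the two uncovered sets must intersect each other, and their common element is added to $C$, covering both at the price of one element. An auxiliary set $A_i\subseteq m_0$ absorbs cover elements whose addition to $C$ can be deferred, and a three-way count of the round types (\clref{ck}) shows that in at least a third of the rounds $C$ does not grow at all, which is where the $\lceil 2k/3\rceil$ comes from. If you want to salvage your sunflower picture you need some analogue of that single-uncovered-set invariant; without it the hidden bad sets cannot be controlled, and the four-augmentation bound is the entire content of the lemma, not a detail to be formalized later.
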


\begin{proof}
We have to provide a strategy for \textit{minimizer} that ensures the covering number of the resulting family to be small. Let us first assume that \textit{minimizer} starts the game and let $m_0,M_1,m_1,M_2,m_2,...$ denote the $k$-sets claimed during the game. \textit{Minimizer}'s strategy will involve sets $A_i,C_i$ for $2 \le i \le k$ with the properties:

\vspace{3mm}

a) $A_i \subseteq m_0$, $C_{i-1} \subseteq C_i$, $|C_i| \le |C_{i-1}|+1$;

\vspace{1mm}

b) the sets $m_0\setminus (A_i \cup C_i)$ and $m_1\setminus C_i,...,m_i \setminus C_i$ are non-empty and pairwise disjoint;

\vspace{1mm}

c) $A_i \cup C_i$ meets all sets $m_j,M_j$ for $j \le i$;

\vspace{1mm}

d) $C_i$ meets all $m_j$'s and all but at most one of the $M_j$'s for $j \le i$.

\vskip 0.3truecm

Before proving how \textit{minimizer} is able to pick her $k$-sets $m_0,m_1,...,m_k$ with the above properties, let us explain why it is good for her. She would like to utilize \pref{minmax} to claim that no matter how the players continue to play, after choosing $m_{k+1}$, she can be sure that the resulting maximal intersecting family will have low covering number. As she cannot control \textit{Maximizer}'s moves, she will apply \pref{minmax} with $k+1$ of her own sets playing the role of $\cG$. The set $C_i$ will be a temporary approximation of a future covering set $C$: it meets all previously claimed sets but at most one and its union with the auxiliary set $A_i$ does indeed meet all sets of the game until round $i$. Whenever \textit{minimizer} decides that an element $x$ is included in $C_i$, then $x$ stays there forever. This is condition a) saying $C_{i-1} \subseteq C_i$. Such a strategy would be easy to follow without the sets $A_i$ and still fulfilling the second part of condition a), namely that the covering set can have at most one new element in each round. Indeed, \textit{minimizer} in the $(i+1)$st round could claim a set $m_{i+1}$ containing $C_i$ and an element $x$ from \textit{Maximizer}'s last move $M_i$ and let $C_{i+1}=C_i\cup \{x\}$. This would be a legal move as $m_{i+1}$ meets all sets $m_0, M_1,m_1,...,M_{i-1},m_i$ as $m_{i+1}$ contains $C_i$ and meets $M_i$ as they both contain $x$. The problem with this strategy is that $C_i$ might grow in each round and the final covering set might be of size $k$.

At the end of the proof of \lref{taumin} we show how the auxiliary sets $A_i$ can help so that in many rounds the covering set does not need to grow at all.
But first we make sure that \textit{minimizer} is able to claim $k$-sets $m_0,m_1,...,m_k$ such that sets $A_i,C_i$ with the above properties exist. \textit{Minimizer} can claim an arbitrary $m_0$, and after \textit{Maximizer}'s first move $M_1$, he can pick $a_1 \in m_0 \cap M_1$ and claim $m_1:=\{a_1\}\cup N_1$ where $N_1$ is a $(k-1)$-set disjoint from $m_0\cup M_1$. \textit{Minimizer}'s strategy distinguishes two cases for claiming $m_2$ depending on \textit{Maximizer}'s second move $M_2$. If $a_1 \in M_2$, then \textit{minimizer} claims $m_2:=\{a_1\} \cup N_2$ with $N_2$ being a $(k-1)$-set disjoint from $m_0 \cup m_1 \cup M_1 \cup M_2$ and we define $A_2:=\emptyset, C_2:=\{a_1\}$. If $a_1 \notin M_2$, then by the intersecting property there exists $c_2 \in M_1 \cap M_2$. Let \textit{minimizer} claim $m_2:=\{a_1,c_2\} \cup N_2$ with $N_2$ being a $(k-2)$-set disjoint from $m_0 \cup m_1 \cup M_1 \cup M_2$ and put $A_2:=\emptyset,C_2:=\{a_1,c_2\}$. In both cases, the properties a)-d) hold.

Let us assume that \textit{minimizer} is able to claim $k$-sets $m_0,m_1,...,m_{i-1}$ and define sets $A_i,C_i$. The strategy of \textit{minimizer} will distinguish several cases depending on \textit{Maximizer}'s move $M_i$. In all cases \textit{minimizer}'s set $m_i$ will consist of elements of $C_{i-1}$, a possible element $a_i$ and elements of a set $N_i$ that is disjoint from all previously claimed sets. As we are interested in not more than $2(k+2)$ sets, therefore \textit{minimizer} will always be able to choose $N_i$ if $n \ge 2k(k+2)$ holds.

\vskip 0.6truecm

\textsc{Case I}: $C_{i-1}$ meets all previously claimed $k$-sets.

\vskip 4mm

$\bullet$ If $M_i \cap C_{i-1}\neq \emptyset$, then

\vskip 2mm

\hskip 1cm $\circ$ let $m_i:=C_{i-1} \cup N_i$ with $|N_i|=k-|C_{i-1}|$ and $N_i \cap (\cup_{j=0}^{i-1}m_j\cup\cup_{j=1}^iM_j)=\emptyset$;

\vskip 1mm

\hskip 1cm $\circ$ let $C_i:=C_{i-1}, A_i:=A_{i-1}$.

\vskip 2mm

\hskip 1cm The set $m_i$ is a legal move for \textit{minimizer} in this subcase
 as it contains $C_{i-1}$.

\hskip 1cm Now observe that

\vskip 2mm

\hskip 1cm a) is satisfied as it was satisfied in step $i-1$;

\vskip 1mm

\hskip 1cm b) is satisfied as it was satisfied in the $i-1$ and by the choice of $N_i$;

\vskip 1mm

\hskip 1cm c) is satisfied as it was satisfied in step  $i-1$ and by the fact that we are in the

\hskip 15mm $M_i \cap C_{i-1} \neq \emptyset$ subcase and we chose $m_i$ to contain $C_{i-1}$;

\vskip 1mm

\hskip 1cm d) is satisfied by the assumptions that $C_i=C_{i-1}$ meets all previously claimed sets

\hskip 15mm and that $C_{i-1}\cap M_{i-1}\neq \emptyset$.

\vskip 0.3cm

Note that if step $i$ is in this subcase, then in step $i+1$ we are still in \textsc{Case I}.

\vskip 1cm

$\bullet$ If $M_i \cap C_{i-1}= \emptyset$ and there exists $a_i \in (M_i \cap m_0)\setminus A_{i-1}$, then

\vskip 2mm

\hskip 1cm $\circ$ let $m_i:=C_{i-1}\cup \{a_i\}\cup N_i$  with $|N_i|=k-|C_{i-1}|-1$ and $N_i \cap (\cup_{j=0}^{i-1}m_j\cup\cup_{j=1}^iM_j)=\emptyset$;

\vskip 1mm

\hskip 1cm $\circ$ let $C_i:=C_{i-1}, A_i=A_{i-1}\cup \{a_i\}$.

\vskip 2mm

\hskip 1cm The set $m_i$ is a legal move for \textit{minimizer} in this subcase
as $m_i$ contains $C_{i-1}$

\hskip 1cm and $a_i$. Now observe that

\vskip 2mm

\hskip 1cm a) is satisfied as it was satisfied in step $i-1$ and by the choice $a_i \in m_0$;

\vskip 1mm

\hskip 1cm b) is satisfied as it was satisfied in step $i-1$ and by the choice of $N_i$ and $a_i$;

\vskip 1mm

\hskip 1cm c) is satisfied as it was satisfied in step $i-1$ and by the fact that $a_i \in M_i \cap m_i$;

\vskip 1mm

\hskip 1cm d) is satisfied as $C_{i-1}=C_{i-1}$ meets all previously claimed sets.

\vskip 0.3truecm

Note that if step $i$ is in this subcase, then in step $i+1$ we are not in \textsc{Case I} as $M_i$ is not 

met by $C_{i-1}=C_i$.

\vskip 1cm

$\bullet$ If none of the above subcases of \textsc{Case I} happen, then we must have $M_i \cap C_{i-1}=\emptyset$

\hskip 3mm and $\emptyset \neq M_i \cap m_0 \subseteq A_{i-1}$, as \textit{Maximizer} must pick $M_i$ such that it intersects all

\hskip 3mm previously claimed $k$-sets, in particular it should intersect $m_0$.

\hskip 3mm Let $a \in M_i \cap m_0$ and thus $a \in A_{i-1}$ and

\vskip 2mm

\hskip 1cm $\circ$ let $m_i:=C_{i-1} \cup \{a\} \cup N_i$  with $|N_i|=k-|C_{i-1}|-1$ and $N_i \cap (\cup_{j=0}^{i-1}m_j\cup\cup_{j=1}^iM_j)=\emptyset$;

\vskip 1mm

\hskip 1cm $\circ$ let $A_i:=A_{i-1}\setminus \{a\}, C_i:=C_{i-1} \cup \{a\}$.

\vskip 2mm

\hskip 1cm The set $m_i$ is a legal move for \textit{minimizer} in this subcase
 as $m_i$ contains $C_{i-1}$

\hskip 1cm and $a$. Now observe that

\vskip 2mm

\hskip 1cm a) is satisfied as it was satisfied in step $i-1$;

\vskip 1mm

\hskip 1cm b) is satisfied as it was satisfied in step $i-1$,  $A_{i-1} \cup C_{i-1} = A_i \cup C_i$ and by the

\hskip 14mm  choice of $N_i$ and $a_i$;

\vskip 1mm

\hskip 1cm c) is satisfied as $A_{i-1} \cup C_{i-1}$ meets all sets $m_0,M_1,...M_{i-1}$, and in this subcase we

\hskip 14mm have $A_{i-1} \cup C_{i-1} = A_i \cup C_i$ and $a \in M_i$;

\vskip 1mm

\hskip 1cm d) is satisfied as $C_{i-1} \subset C_i$ meets all previously claimed sets and $a \in m_i\cap C_i$.

\vskip 0.3truecm

Note that if step $i$ is in this subcase, then in step $i+1$ we are still in \textsc{Case I}.

\vskip 1cm

\textsc{Case II}: There exists an $M_j$ ($j\le i-1$) with $M_j \cap C_{i-1}=\emptyset$.

\vskip 4mm

$\bullet$ As \textit{Maximizer} picks $M_i$ such that it meets all previously claimed $k$-sets, there must

\hskip 3mm exist an element $c \in M_i \cap M_j$. By c), we have that $C_{i-1}\cup \{c\}$ meets all previously

\hskip 3mm claimed $k$-sets, then

\vskip 2mm

\hskip 1cm $\circ$ let $m_i:=C_{i-1}\cup \{c\} \cup N_i$
with $|N_i|=k-|C_{i-1}|-1$ and $N_i \cap (\cup_{j=0}^{i-1}m_j\cup\cup_{j=1}^iM_j)=\emptyset$;

\vskip 1mm

\hskip 1cm $\circ$ let $A_i:=A_{i-1}\setminus \{c\}, C_i:=C_{i-1}\cup \{c\}$.

\vskip 2mm

\hskip 1cm The set $m_i$ is a legal move for \textit{minimizer} in this subcase as $m_i$ contains $C_{i-1}$

\hskip 1cm  and $c$. Now observe that

\vskip 2mm

\hskip 1cm a) is satisfied as it was satisfied in step $i-1$ thus $A_i\subset A_{i-1}\subset m_0$ holds;

\vskip 1mm

\hskip 1cm b) is satisfied as it was satisfied in step $i-1$,  $A_{i-1} \cup C_{i-1} = A_i \cup C_i$ and by

\hskip 14mm the choice of $N_i$ and $c$;

\vskip 1mm

\hskip 1cm c) is satisfied as it was satisfied in step $i-1$, $A_{i-1} \cup C_{i-1} = A_i \cup C_i$ and

\hskip 14mm $c \in M_i \cap M_j$;

\vskip 1mm

\hskip 1cm d) is satisfied by the fact that that $C_{i-1}$ meets all previously claimed sets but $M_j$,

\hskip 14mm$c \in M_i \cap M_j$ and $C_i=C_{i-1}\cup \{c\}$.

\vskip 0.3truecm

Note that if step $i$ is in \textsc{Case II}, then step $i+1$ is in \textsc{Case I}.

\vskip 6mm

We have just seen that \textit{minimizer} is able to claim $k$-sets $m_1,m_2,...,m_k$ such that there exist sets $A_i, C_i$ ($2 \le i \le k$) satisfying the properties a)-d). The following claim states that in at least one third of the rounds \textit{minimizer} does not need to increase $C_i$ and thus obtains a small covering set.

\begin{claim}
\label{clm:ck}
For any $2 \le i \le k$, the inequality $|C_i| \le 3+\lfloor \frac{2(i-2)}{3}\rfloor$ holds.
\end{claim}

\begin{proof}[Proof of Claim]
Let $\alpha_i:=|\{j:2 < j \le i, |C_{j-1}|=|C_j|\}|$, i.e. the number of steps when we are in the first two subcases of Case 1. Let $\beta_i:=|\{j: 2<j\le i, |C_j|=|C_{j-1}|+1, j\text{th turn is in Case 1}\}|$ and $\gamma_i:=|\{j: 2<j\le i,  j\text{th turn is in Case 2}\}|$. Clearly, we have $\alpha_i+\beta_i+\gamma_i=i-2$. If the $j$th turn is in the last subcase of Case 1 or in Case 2, then $C_j$ meets all previously claimed $k$-subsets and $m_j$, as well. Thus we obtain $\gamma_i+\beta_i \le \alpha_i+\beta_i+1$ and therefore $\gamma_i \le \alpha_i+1$. Also, as in the last subcase of Case 1 the size of $A_j$ decreases, and this size only increases if we are in the first two subcases of Case 1, we obtain $\beta_i \le \alpha_i$. From these three inequalities it follows that $1+(i-2)/3 \le \alpha_i$ holds and thus statement of \clref{ck}.
\end{proof}

Let $M_{k+1}$ be the next move of \textit{Maximizer}. By property d), there can be at most one set $M_j$ that is disjoint from $C_k$. If \textit{minimizer} picks an element $m$ of $M_{k+1} \cap M_j$ and claims the $k$-set $m_{k+1}:=C_k \cup \{m\} \cup N_{k+1}$ with $|N_{k+1}|=k-|C_{k}|-1$ and $N_k \cap (\cup_{j=0}^{k}m_j\cup\cup_{j=1}^{k+1}M_j)=\emptyset$, then the set $C=C_k \cup \{m\}$ and $m_1,...,m_{k+1}$ satisfy the conditions of \pref{minmax}. This proves $\tau_m(n,k)\le |C| \le \lceil 2k/3\rceil +3$.

If \textit{Maximizer} starts the game, then \textit{minimizer} can imitate his previous strategy to obtain a sequence of moves  $M_1,m_1,M_2,m_2,...,M_k,m_k$ with the following slight modification: the sets $A_i$, $C_i$ and the moves $m_i$ still satisfy properties b) - d), but property a) is replaced with

\vspace{3mm}

a') $A_i \subseteq m_0$, $C_{i-1} \subseteq C_i$, $|C_i| \le |C_{i-1}|+1$.

\vspace{2mm}

The proof is identical to the one  when \textit{minimizer} starts the game. In this way, \textit{minimizer} obtains a $C_k$ with the same size as before and in his (\textit{k+1})st and (\textit{k+2})nd moves, he can add two more elements to obtain a set $C'$ that is just one larger and satisfies the conditions of \pref{minmax} together with $m_2,...,m_{k+1},m_{k+2}$. Therefore we obtain $\tau_M(n,k)\le |C'| \le \lceil 2k/3\rceil +4$.
\end{proof}

Now we turn our attention to the upper bound on $\tau_m(n,k)$ and $\tau_M(n,k)$. In the following proof we will use the following notations: the \textit{degree} of a vertex $x$ in a family $\cF$ of sets is $deg_{\cF}(x):=|\{F : x \in F \in \cF \}|$. Also, we will write $\cM_i:=\{M_j : j \le i \}$ for
 the family of $k$-sets that $\textit{Maximizer}$ picks until step $i$.

\begin{lemma}
\label{lem:taumax}
For any positive integer $k$, if $k^{3/2} \le n$, then $$ \frac{1}{2}\sqrt{k} \le \tau_m(n,k),\tau_M(n,k)$$ holds.
\end{lemma}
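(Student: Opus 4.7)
The plan is to give \textit{Maximizer} a strategy that makes the family $\cM$ of his own moves satisfy $\tau(\cM)\ge t:=\lceil\sqrt{k}/2\rceil$. Since the final family $\cF$ contains $\cM$ and $\tau$ is monotone under adding sets, this forces $\tau(\cF)\ge t\ge\sqrt{k}/2$. The intended $\cM$ is modeled on a projective plane: pick a prime power $q$ with $q+1\ge t$ and $q=O(\sqrt{k})$ (which exists by Bertrand's postulate), fix a projective plane of order $q$ with point set $V\subset[n]$ of size $N:=q^2+q+1$ and lines $L_1,\ldots,L_N$ of size $q+1$ meeting pairwise in exactly one point. \textit{Maximizer}'s goal is to play the $N$ sets $M_i=L_i\cup B_i$, where each $B_i\subset[n]\setminus V$ is a $(k-q-1)$-subset chosen during play, under the constraint that no element of $[n]\setminus V$ appears in more than $q+1$ of the $B_i$.

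First I would verify the lower bound $\tau(\cM)\ge q+1\ge t$ by a double-counting. For any $S\subset[n]$ with $|S|=q$, write $S_V:=S\cap V$ and $S_E:=S\setminus V$; since each point of $V$ lies on exactly $q+1$ lines of the plane, at least $N-|S_V|(q+1)$ of the lines $L_i$ are disjoint from $S_V$, and the maximum-degree bound ensures $S_E$ meets at most $|S_E|(q+1)$ of the $B_i$, so at least $N-q(q+1)=1$ of the sets $M_i$ is disjoint from $S$. Hence $\tau(\cM)\ge q+1$.

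The in-game implementation is greedy: in round $i$, \textit{Maximizer} picks $B_i$ that contains, for each prior \textit{minimizer} move $m_l$ with $L_i\cap m_l=\emptyset$, one point of $m_l\setminus V$ of current degree $<q+1$, and fills the remaining slots with low-degree points of $[n]\setminus V$. The counting is that $\sum_j|B_j|\le N(k-q-1)=O(qk)$; with $n\ge k^{3/2}$ the average degree on $[n]\setminus V$ is $O(1)$, so the number of currently saturated points stays $O(k)$, while each $m_l\setminus V$ contains at least $k-|V|>k/2$ elements, leaving an abundant pool of admissible candidates.

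The main obstacle will be upgrading this averaging into a guaranteed per-round existence of an admissible $B_i$, because \textit{minimizer} can try to concentrate her moves on elements \textit{Maximizer} is being forced to reuse. I expect to handle this via a Hall-type matching argument on the bipartite graph between \textit{minimizer}'s outstanding moves and $[n]\setminus V$ with right-capacities $q+1$; the hypothesis $n\ge k^{3/2}$ is precisely what keeps Hall's condition valid throughout the game. The case when \textit{minimizer} moves first is handled identically, with \textit{Maximizer} opening his projective-plane plan in his own first move.
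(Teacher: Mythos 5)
Your reduction to a statement about \textit{Maximizer}'s own family is sound ($\cM\subseteq\cF$ does give $\tau(\cF)\ge\tau(\cM)$), and the double-counting showing that a projective-plane-plus-bounded-degree-fillers family has $\tau\ge q+1$ is correct. This is a genuinely different design from the paper's: the paper has \textit{Maximizer} build only $\lfloor\sqrt{k}\rfloor$ sets of maximum degree $2$ and invokes $\tau(\cM)\ge|\cM|/\Delta(\cM)$, whereas you ask him to build $N=q^2+q+1=\Theta(k)$ sets with degree cap $q+1$. That difference is exactly where your argument breaks. The entire difficulty of the lemma is the in-game maintainability of the degree cap against an adversarial \textit{minimizer}, and your supporting computation is wrong: $\sum_j|B_j|\le N(k-q-1)=\Theta(q^2k)=\Theta(k^2)$, not $O(qk)$, so the average degree on $[n]\setminus V$ is $\Theta(k^2/n)=\Theta(\sqrt{k})$ rather than $O(1)$, and the number of saturated points (degree $q+1$) can reach $\Theta\bigl(k^2/(q+1)\bigr)=\Theta(k^{3/2})=\Theta(n)$ rather than $O(k)$. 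With demand and capacity of the same order there is no slack, the ``abundant pool'' of admissible candidates in $m_l\setminus V$ disappears, and the Hall-type argument you defer to is precisely the crux of the proof, not a routine verification; as stated it is not clear it can be made to work, since \textit{minimizer} can aim her $\Theta(k)$ sets (each of size $k$, totalling $\Theta(k^2)$ elements) at the saturated region.

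There is also a secondary quantitative gap: you need a prime power $q$ with $q+1\ge\sqrt{k}/2$ \emph{and} $q^2+q+1$ comfortably below $k$ (otherwise \textit{minimizer} can play $m_l\subseteq V$, killing the rule ``pick a point of $m_l\setminus V$''), i.e.\ roughly $q\in[\sqrt{k}/2,\sqrt{k/2})$; Bertrand's postulate only guarantees a prime in a window of ratio $2$, so this needs a separate argument for all $k$. The paper avoids all of these issues by running the construction for only $\sqrt{k}$ rounds: then the total pollution is small enough relative to the size $k$ of each of \textit{minimizer}'s sets that a greedy choice of intersection points always exists, at the cost of only getting maximum degree $2$ and hence the weaker (but sufficient) bound $\tau\ge\sqrt{k}/2$. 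As it stands, your proposal is a plausible programme whose hardest step is missing and whose heuristic justification for that step is arithmetically incorrect.
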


\begin{proof} Note that $$ \frac{|\cF|}{\max_{x \in X} deg_{\cF}(x)} \le \tau(\cF)$$ holds for any set $X$ and a family $\cF$ of subsets of $X$. Therefore a possible strategy for $\textit{Maximizer}$ is to keep $$\max_{s \in [n]}deg_{\cM_j}(s)$$ as small as possible. If he is able to do so long enough, then already the sets claimed by him will ensure that the covering number of the resulting family is large. We claim
that
\textit{Maximizer} can choose legal steps $M_1$, ..., $M_{\lfloor  k^{1/2}\rfloor}$ such that $$\max_{s \in [n]}deg_{\cM_{\lfloor k^{1/2}\rfloor}}(s) \le 2$$
holds.

In order to establish the aim above, in the $i$th step \textit{Maximizer} will choose his set $M_i$ with $M_i=M^1_i \cup M^2_i$, $M^1_i \cap M^2_i =\emptyset$, $|M^1_i|=\lfloor  k^{1/2}\rfloor-1=:l$ and $|M^2_i|=k-l+1$. The $M^1_i$'s are independent of how \textit{minimizer} picks his sets, they are chosen to ensure that $M_j \cap M_i\neq \emptyset$ holds for any pair $1 \le j<i \le l+1$. The other part $M^2_i$ is supposed to ensure that $M_i$ meets all $i$ or $i-1$ sets that \textit{minimizer} has claimed by that point of the game (depending on who started the game).

We define the $M^1_i$'s inductively: let $M^1_1=[l]$ and assume the elements of $M^1_1,...,M^1_{i-1}$ are enumerated increasingly as $v^1_1,...v^1_l,v^2_1,...,v^2_l,...,v^{i-1}_1,...,v^{i-1}_l$, then let
\[
M^1_i=\{v^1_{i-1},v^2_{i-1},...,v^{i-1}_{i-1}\} \cup \{u_i,u_i+1,...,u_i+l-i\}
\]
where $u_i:=1+\sum_{h=0}^{i-1}l-h$. By definition, $M^1_i$ meets all previous $M^1_j$'s in exactly one point and all intersection points are different, thus we obtain that the maximum degree is 2. Also, since the $M^1_i$ introduces $l-i+1$ new points, we have $U:=\bigcup_{i=1}^{l+1} M^1_i=\left[\frac{l(l+1)}{2}\right]$ and $\frac{l(l+1)}{2} \le k/2$.

We still have to show that \textit{Maximizer} can define the $M^2_i$'s such that $M^2_i$ intersect all previously claimed sets of \textit{minimizer} and the maximum degree is kept at most 2. \textit{Maximizer} tries to pick the $M^2_i$'s such that the following three properties
hold for $i \le \lfloor  k^{1/2}\rfloor$ with the notation $\cM^2_i:=\{M^2_j : j \le i \}$:

\vspace{2mm}

$(1)$ $|\{x : deg_{\cM^2_i}(x) = 2 \}| \le \frac{i^2}{2}$,

\vspace{1mm}

$(2)$ $\{x : deg_{\cM^2_i}(x) \ge 3 \} = \emptyset$, and

\vspace{1mm}

$(3)$ $U \cap M^2_i=\emptyset$.

\vspace{3mm}

We prove by induction on $i$ that he can choose $M^2_i$ satisfying $(1)$, $(2)$, and $(3)$. $M^2_1$ can be chosen arbitrarily with the restriction that it is disjoint from $U$ and if \textit{minimizer} starts the game, then it should meet $m_1$. Note that the latter is possible as $|U| \le k/2$ and thus $|m_1 \setminus U| \ge k/2$ holds.

Assume \textit{Maximizer} was able to pick $M^2_1,...,M^2_{i-1}$ for some $1<i\le \lfloor k^{1/2} \rfloor$ satisfying $(1)$, $(2)$, and $(3)$ and now he has to pick $M^2_i$. Observe that by the inductive
hypothesis for all $1 \le h <i$ we have $$|m_h \setminus (\{x : deg_{\cM^2_{i-1}}(x) = 2 \} \cup U)| > k-k/2-(i-1)^2\ge 2 k^{1/2},$$
and the sets $M^2_h\setminus \{x : deg_{\cM^2_{i-1}}(x) = 2 \} $ with $1 \le h <i$ are pairwise disjoint. Thus if \textit{Maximizer} picks $M^2_i$ such that it is disjoint from $\{x : deg_{\cM^2_{i-1}}(x) \ge 2 \} \cup U$, then $(2)$ and $(3)$ are clearly satisfied. Let us fix $x_h \in m_h \setminus (\{x : deg_{\cM^2_{i-1}}(x) \ge 2 \} \cup U)$ for all $1 \le h <i$ and let $M^2_i=\{x_h: 1\le h < i\} \cup M$ where $|M|=k-l+1-|\{x_h: 1\le h < i\}|$ and $M \cap \bigcup_{h=1}^{i-1}(m_h\cup M_h)=\emptyset$. It is possible to satisfy this latter condition as $n \ge k^{3/2}$. We see that the new degree-2 elements are the $x_h$'s and thus there is at most $i-1$ of them.
As $(i-1)^2 +i-1 \le i^2$ we know that $\cM_i$ satisfies $(1)$.

This inductive construction showed that the maximum degree of $\cM_{l+1}$ is 2 and thus its covering number is at least $(l+1)/2=\lfloor  \frac{\sqrt{k}}{2}\rfloor$.
\end{proof}

\vskip 0.3truecm

\begin{proof}[Proof of \tref{main}.] To obtain the upper bound, Fast can use \textit{Maximizer}'s strategy in the $\tau$-game, that is whenever it is his turn, he just copies whatever \textit{Maximizer} would do in the same situation. According to \lref{taumax}, Fast can make sure that the resulting maximal intersecting family $\cF$ will have covering number at least $\frac{\sqrt{k}}{2}$. Thus by \pref{tau}, we have
\[|\cF| \le k^{\sqrt{k}/2}\binom{n-k^{\sqrt{k}/2}}{k-k^{\sqrt{k}/2}}=O_k(n^{k-\sqrt{k}/2}).
\]

To obtain the lower bound, Slow can imitate \textit{minimizer}'s strategy to ensure that, by \lref{taumin}, the resulting maximal intersecting family $\cF$ will have covering number at most $\lceil 2k/3\rceil +2$. Thus, by \pref{tau}, we have $|\cF| \ge \binom{n-\lceil 2k/3\rceil-4}{k-\lceil2k/3\rceil -4}=\Omega_k(n^{k/3-5})$.
\end{proof}

\section{Concluding remarks and open problems}

The main result of the present paper, \tref{main} states that the exponent of $gsat(\mathbb{I}_{n,k})$ grows linearly in $k$. We proved that the constant of the linear term is at least $1/3$ and at most 1. We deduced this result by obtaining lower and upper bounds on the covering number of the family of sets claimed during the game and using a well-known relation between the covering number and the size of an intersecting family. However we were not able to determine the order of magnitude of the covering number. We conjecture this to be linear in $k$. If it is true, this would have the following consequence.

\begin{conjecture}
There exists a constant $c>0$ such that for any $k\ge 2$ and $n \ge n_0(k)$ the inequality $gsat_F(\mathbb{I}_{n,k}), gsat_S(\mathbb{I}_{n,k}) \le O(n^{(1-c)k})$ holds.
\end{conjecture}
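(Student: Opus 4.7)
The plan is to strengthen \lref{taumax} so that \textit{Maximizer} forces $\tau(\cF)\geq ck$ for an absolute constant $c>0$; the conjectured upper bound on $gsat$ then follows from \pref{tau} by the same computation used in the proof of \tref{main}, giving $|\cF|\leq k^{ck}\binom{n-ck}{k-ck}=O(n^{(1-c)k})$. The bottleneck in \lref{taumax} is that keeping $\max\deg_\cM\leq 2$ costs $\binom{i}{2}$ distinct degree-$2$ witnesses, leaving enough room in \textit{Maximizer}'s sets only while $\binom{i}{2}\lesssim k/2$. But an intersecting family of $k$-sets with max degree $2$ and size linear in $k$ does exist: the Johnson-type template $\{F_v\}_{v\in[r+1]}$ with $F_v=\{\{v,w\}:w\in[r+1]\setminus\{v\}\}$ lives on the core $T=\binom{[r+1]}{2}$ of $\binom{r+1}{2}$ vertices and is intersecting (via $\{v,v'\}\in F_v\cap F_{v'}$) and exactly $2$-regular; taking $r=\lfloor k/3\rfloor$ gives $|\cM|=r+1=\Theta(k)$ sets of size $r$, from which $\tau(\cM)\geq(r+1)/2=\Omega(k)$.

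The strategy for \textit{Maximizer} would be to play $M_i=F_{v_i}\cup E_i$, where $v_i\in[r+1]$ indexes the $i$th template line and $E_i\subseteq[n]\setminus T$, $|E_i|=k-r$, is chosen so that $E_i$ meets every previously played minimizer set $m_j$ (at most $i-1$ of them). Picking $E_i$ greedily, taking one fresh element $x_j^{(i)}\in m_j\setminus T$ for each $j<i$ and padding with previously unused vertices, keeps the $\cM$-degree of every vertex outside $T$ at most $1$, so $\max\deg_\cM\leq 2$, and the required lower bound follows provided \textit{Maximizer} can actually play all $r+1$ template lines.

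The main obstacle, and I suspect the reason the conjecture remains open, is \textit{minimizer}'s ability to claim a set entirely inside $T$. If $m_j\subseteq T$ is the edge set of a clique on $S\subseteq[r+1]$ with $|S|\approx\sqrt{2k}$, then $m_j$ is disjoint from $F_v$ for every $v\notin S$, so a single such move rules out $r+1-\sqrt{2k}=\Omega(k)$ of the template lines at once; \textit{minimizer} can then use $O(\sqrt{k})$ such clique moves to kill almost all template lines, collapsing $|\cM|$ back to $O(\sqrt{k})$ and recovering only the bound of \lref{taumax}. Any template with $r=\Omega(k)$ faces the same obstruction by an elementary counting: if every $F_v$ is forced to intersect every $k$-subset of $T$ then $|F_v|>|T|-k$, so $r>\binom{r+1}{2}-k$ and $r=O(\sqrt{k})$. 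Beating this barrier appears to need a genuinely new idea, for instance a randomized strategy in which \textit{Maximizer} samples an embedding of the template afresh each round and averages over the randomness, which in turn requires proving that no $k$-set in $T$ can simultaneously obstruct more than a vanishing fraction of the random embeddings. Establishing such a robustness statement for Johnson-type templates is where I would expect the majority of the work to reside.
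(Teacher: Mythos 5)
This statement is posed as an open conjecture in the paper; the authors give no proof of it, only the reduction you also identify (force the covering number of the final family to be $\Omega(k)$ and then apply \pref{tau}), so there is nothing for your proposal to match. Your write-up is candid that it does not close the argument, and indeed it does not: the reduction step is sound and is exactly the route the paper's concluding remarks suggest, but the core of the conjecture --- a Maximizer strategy forcing $\tau(\cM)=\Omega(k)$ against an adversarial minimizer --- is left unproved, so the statement remains unestablished.

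Two substantive comments on your analysis. First, the obstruction you describe (minimizer claiming a $k$-set inside the template's core $T$, killing $\Omega(k)$ template lines at once) is a genuine defect of your Johnson-type template, whose core has size $\binom{r+1}{2}=\Theta(k^2)\gg k$. But your follow-up claim that \emph{any} template with $r=\Omega(k)$ lines faces the same obstruction is too strong: your counting argument ($|F_v|>|T|-k$, hence $r=O(\sqrt{k})$) presupposes $|T|=\binom{r+1}{2}$, and more generally the obstruction is vacuous whenever $|T|<k$, since then no $k$-set of the minimizer fits inside $T$ at all. This is precisely why the paper's concluding remarks point to Kahn's construction of an $r$-uniform intersecting family with $\tau=r$ \emph{and} ground set of size only $O(r)$: taking $r=\eps k$ for small $\eps$ gives a template immune to your clique attack. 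Second, even with a small core the paper identifies a remaining difficulty that your proposal does not address --- Maximizer must additionally meet all of minimizer's previously claimed sets while keeping the maximum degree of $\cM$ bounded, and it is this bookkeeping (the analogue of conditions $(1)$--$(3)$ in the proof of \lref{taumax}) that currently caps the argument at $\sqrt{k}$ rounds. So the honest conclusion is that your proposal reproduces the paper's intended reduction and one of the two known obstacles, slightly misdiagnoses the universality of that obstacle, and leaves the conjecture open, as it is in the paper.
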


In the proof of \lref{taumax}, there are two reasons for which \textit{Maximizer} cannot continue his strategy for more than $\sqrt k$ steps. First of all the union of the $M^1_j$'s becomes too large, and second of all the intersection points of the $M^2_j$'s and the sets of \textit{minimizer} should be disjoint. The first problem can probably be overcome by a result of Kahn \cite{K} who showed the existence of an $r$-uniform intersecting family $\cF_r$ with $|\cF|= O(r)$, $|\bigcup_{F \in \cF_r}F|=O(r)$ and $\tau(\cF_r)=r$.

As we mentioned in the introduction, the answers to game saturation problems considered so far did not depend on which player makes the first move, while this is the case for intersecting families if $k=2$. However, if we consider the $\tau$-game, both our lower and upper bounds differ by at most one depending on whether it is \textit{Maximizer} or \textit{minimizer} to make the first move. Thus we formulate the following conjecture.
\begin{conjecture}
There exists a constant $c$ such that $|\tau_M(n,k) -\tau_m(n,k)| \le c$ holds independently of $n$ and $k$.
\end{conjecture}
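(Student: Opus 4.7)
I would try to establish the conjecture by proving two one-sided bounds,
$\tau_m(n,k) \le \tau_M(n,k) + c_1$ and $\tau_M(n,k) \le \tau_m(n,k) + c_2$,
via symmetric strategy-stealing arguments with a ``dummy opening.'' The guiding intuition is that adding one set $F$ to an intersecting family $\cF$ changes the covering number by at most $1$, since any cover of $\cF$ together with a single element of $F$ covers $\cF \cup \{F\}$. Thus an extra move at the start of the game ought to translate into at most a constant distortion of $\tau$, provided the rest of the play can be faithfully simulated.

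For the direction $\tau_m(n,k) \le \tau_M(n,k)+c_1$, minimizer would open with a set $m_0$ engineered to contain a small fixed ``pivot'' subset $P\subseteq m_0$ of bounded size, and thereafter simulate her optimal strategy from the Maximizer-starts $\tau$-game, treating \textit{Maximizer}'s actual $M_1$ as the opening move of that simulated game. Every real subsequent set must intersect $m_0$, so \textit{Maximizer} is strictly more constrained in the real game than in the simulated one, which can only help minimizer. The delicate point is that her simulated response $m_i$ might fail to intersect $m_0$; whenever this happens she would substitute by adding a fresh element of $P$ to $m_i$ and simultaneously commit that element to her tentative covering set. Since $|P|$ is a fixed constant and each element can play the repair role at most once, the total loss should be $O(|P|)$. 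For the reverse inequality $\tau_M(n,k) \le \tau_m(n,k)+c_2$, the analogous construction has Maximizer burn his opening $M_1$ as a ``spread-out'' set disjoint from a fixed pivot region, and then simulate his optimal strategy from the minimizer-starts $\tau$-game; minimizer's forced intersection with $M_1$ is just an additional constraint on her, which cannot hurt Maximizer.

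The main obstacle, in both directions, is showing that the repair phase is genuinely $O(1)$-bounded and does not cascade. A single forced substitution can in principle ripple through later rounds, since the altered move might elicit a different adversary response and put the simulation out of sync with the optimal-play history recorded in the strategy. The key technical lemma I would target is a \emph{stability statement}: if two plays of the $\tau$-game differ in only a bounded number of rounds, then the final covering numbers differ by $O(1)$, uniformly in $n$ and $k$. Morally this is a Lipschitz property of $\tau(\cdot)$ under local perturbations of the game history; it is plausible in view of \pref{tau} and \pref{minmax}, but making it precise, and choosing the pivot $P$ robustly against \textit{Maximizer}'s reactions, is where the real difficulty lies. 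If such a stability lemma can be secured, combining it with the dummy-opening construction yields both one-sided bounds and hence the conjecture.
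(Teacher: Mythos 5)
This statement is an open conjecture in the paper: the authors offer no proof of it, only the observation that their upper bound (\lref{taumin}) and lower bound (\lref{taumax}) each happen to shift by at most one depending on who moves first. So there is no proof in the paper to compare yours against, and the relevant question is whether your proposal actually closes the problem. It does not: it is a plan, and the step you yourself flag as ``where the real difficulty lies'' --- the stability lemma asserting that two plays of the $\tau$-game differing in a bounded number of rounds yield final covering numbers differing by $O(1)$ --- is precisely the content of the conjecture, not a reduction of it. Game values are in general not Lipschitz under perturbations of the history: a single altered move changes the position, hence the adversary's optimal response, hence every subsequent move, and the two final maximal families need not resemble each other at all. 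Note that the paper's own $k=2$ example shows exactly this phenomenon at the level of the saturation game (one extra move swings the score from $3$ to $n-1$); for the $\tau$-game the swing happens to be only from $2$ to $1$ there, but nothing in your argument explains why the $\tau$-game should be immune to such cascades for general $k$.

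There are also more local problems with the dummy-opening construction. The repair mechanism (adding a fresh element of the pivot $P$ when the simulated $m_i$ fails to meet $m_0$) changes the real position away from the simulated one, so \textit{Maximizer}'s real replies are responses to a position the simulated strategy never reaches; your claim that ``\textit{Maximizer} is strictly more constrained in the real game, which can only help minimizer'' lets you feed his real moves into the simulation as legal inputs, but it does not let you conclude that the simulation's guarantee on $\tau$ transfers to the real final family, since the two games terminate at different maximal families. Absent the stability lemma, neither one-sided bound is established, so the proposal should be regarded as a heuristic for why the conjecture is plausible rather than as a proof.
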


Intersecting families are most probably one of the two most studied classes of families in extremal set system theory. The other class is that of \textit{Sperner families}: families $\cF$ that do not contain two different sets $F,F'$ with $F \subset F'$. The downset $\mathbb{S}_n:=\{\cF \subseteq 2^{[n]}: \cF \hskip 0.2truecm \text{is Sperner} \}$ is another example for which the two game saturation numbers differ a lot. Clearly, if Fast starts the game, then he can claim either the empty set or $[n]$ to finish the game immediately as both $\{\emptyset\}$ and $\{[n]\}$ are maximal Sperner families in $2^{[n]}$, thus we have $gsat_F(\mathbb{S}_n)=1$. It is not very hard to see that if Slow starts with claiming a set $F \subset [n]$ of size $\lfloor n/2 \rfloor$, then the game will last at least a linear number of turns. Indeed, consider the family $\cN_F=\{F\setminus \{x\} \cup \{y\}: x \in F, y \in [n]\setminus F\}$. $\cN_F$ has size about $n^2/4$, while
\[
\max_{G: \hskip 0.1truecm G \not\subset F, F \not\subset G}|\{F' \in \cN_F: F' \subseteq G \hskip 0.2truecm \text{or}\ G \subseteq F'\}|=\lceil n/2 \rceil.
\]
This shows that $gsat_S(\mathbb{S}_n) \ge n/2$. One can improve this bound, but we were not able to obtain a superpolynomial lower bound nor an upper bound $o(\binom{[n]}{\lfloor n/2 \rfloor})=o(ex(\mathbb{S}_n))$.

\end{document}